\documentclass[preprint,12pt]{amsart}

\usepackage[margin=1.5in]{geometry}

\usepackage{amsmath}
\usepackage{amsfonts}
\usepackage{amssymb}
\usepackage{xypic}
\usepackage{longtable}
\usepackage{amsthm}
\usepackage{mabliautoref}

\DeclareMathOperator{\Ann}{Ann}

\DeclareMathOperator{\Sing}{Sing}
\DeclareMathOperator{\Bs}{Bs}

\makeatletter
\def\ps@pprintTitle{%
  \let\@oddhead\@empty
  \let\@evenhead\@empty
  \let\@oddfoot\@empty
  \let\@evenfoot\@oddfoot
}
\makeatother

\usepackage[usenames,dvipsnames]{color}

\hypersetup{
 colorlinks,
 citecolor=green,
 linkcolor=blue,
 urlcolor=Blue}

\newcommand{\G}{\mathcal{G}}

\title{Theta divisors whose Gauss map has a fiber of positive dimension}

\subjclass[2010]{14K99}
\keywords{Principally polarized abelian varieties, Gauss map, Schottky problem}
\author{Robert Auffarth}

\address{Departamento de Matem\'aticas, Facultad de
Ciencias, Universidad de Chile, Santiago\\Chile}
\email{rfauffar@uchile.cl}

\author{Giulio Codogni}
\address{Dipartimento di Matematica, Universit\`{a} degli Studi di Roma  ``Tor Vergata" ,   \\
Via della ricerca scientifica, 00133 Roma, Italy.}
\email{codogni@mat.uniroma2.it}

\begin{document}

\maketitle

\begin{abstract}
We construct families of principally polarized abelian varieties whose theta divisor is irreducible and contains an abelian subvariety. These families are used to construct examples when the Gauss map of the theta divisor is only generically finite and not finite. That is, the Gauss map in these cases has at least one positive-dimensional fiber. We also obtain lower-bounds on the dimension of Andreotti-Mayer loci.
\end{abstract}

\section{Introduction}

In this note we construct indecomposable principally polarized abelian varie\-ties (ppavs) of any dimension $g>3$ whose theta divisors contain translates of non-trivial abelian subvarieties (\autoref{Principal}). We show that in some of our examples, the Gauss map has positive-dimensional fibers and we also obtain lower bounds on the dimension of the singular locus of the theta divisor. This then allows us to give lower bounds on the dimensions of some Andreotti-Mayer loci. Our construction is inspired by a paper of G. Kempf \cite{K}, where the author provides an example of a three-dimensional ppav whose theta divisor contains an elliptic curve; his argument is slightly different from ours, so we do not recover his example. An argument related both to ours and Kempf's also appears in O. Debarre's note \cite{Deb2}.

Let $(A,\Theta)$ be a principally polarized abelian variety over an algebraically closed field of characteristic different from $2$, and denote by $0$ the identity in $A$. The Gauss map of $\Theta$ is the rational map 
$$\mathcal{G}:\Theta\dashrightarrow\mathbb{P}\left(T_{0}A^{\vee}\right)\cong\mathbb{P}^{g-1}$$
which maps a smooth point $p$ of $\Theta$ to the tangent hyperplane $T_p\Theta$  translated to the identity. We will always assume that $\Theta$ is irreducible. We refer the reader to \cite[Section 4.4]{BL} and \cite{ACS, CGS, D} for generalities about the Gauss map over the complex numbers , and to \cite{A, W} for a characteristic-free approach.

The Gauss map is regular if and only if $\Theta$ is smooth; in the other cases, it is generically finite and dominant. The case of Jacobians is as usual quite special; indeed the Gauss map is not regular, but it is surjective and finite. We are not aware of any previously known examples where $\mathcal{G}$ is not finite, and one of the objectives of this paper is to show that many such examples exist. We still do not know examples where the Gauss map is not surjective.

\vspace{0.5cm}

Let us now describe our result in detail. Given an abelian subvariety $X$ of a principally polarized abelian variety $(A,\Theta)$, we say that the degree of $X$ is $\delta$ if $h^0(X,\Theta|_X)=\delta$. Let $\mathcal{A}_{n,g-n}^{\delta}$ be the moduli space of principally polarized abelian varieties of dimension $g$ that contain an abelian subvariety of dimension $n$ and degree $\delta$. Note that we can assume without loss of generality that $2n\leq g$. We recall that $\Theta$ is irreducible if and only if $(A,\Theta)$ is not isomorphic to a non-trivial product of principally polarized abelian varieties (see for example \cite[Decomposition Theorem 4.3.1]{BL}, the arguments there can be easily adapted to work over an arbitrary algebraically closed field). 

Our main theorem is:

\begin{theorem}\label{Principal}
Let $(A,\Theta)\in\mathcal{A}_{n,g-n}^{\delta}$, and assume that $2\leq \delta\leq n\leq g/2$. If $X\subseteq A$ is an abelian subvariety of $A$ of dimension $n$ and degree $\delta$ and $Y\subseteq A$ is its complementary abelian subvariety with respect to $\Theta$, then
\begin{enumerate}
\item\label{3}  If $X$ and $Y$ are simple and non-isogenous, then $\Theta$ is irreducible
\item\label{1} $\Theta$ contains a translate of $X$ and a translate of $Y$
\item\label{2} $\dim \Sing(\Theta)\geq \dim g- 2\dim H^0(X,\mathcal{L}_X)=g-2\delta\geq g-2n$
\item\label{4} If either $n=2$, or no translate of $Y$ is contained in $\Sing(\Theta)$, then the Gauss map of $\Theta$ has at least one geometric fiber of dimension at least $g-2n+1$.
\end{enumerate}
\end{theorem}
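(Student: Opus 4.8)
\emph{Proof strategy for part~\ref{4}.} The idea is to use the translate of $Y$ produced in part~\ref{1} to force the Gauss map to contract a large subvariety. Fix $b\in A$ with $b+Y\subseteq\Theta$, and set $W:=T_0Y\subseteq T_0A$, a linear subspace of codimension $n$. The hyperplanes of $T_0A$ containing $W$ form a linearly embedded $\Lambda\cong\mathbb{P}^{n-1}$ inside $\mathbb{P}(T_0A^\vee)\cong\mathbb{P}^{g-1}$. The key point is that $\mathcal{G}$ sends every smooth point of $\Theta$ lying on $b+Y$ into $\Lambda$: if $p\in b+Y$ is such a point, translation by $-p$ carries $b+Y$ onto $Y$ (because $p\in b+Y$) and carries $T_p(b+Y)\subseteq T_p\Theta$ onto $W$, so $\mathcal{G}(p)$, the translate of the tangent hyperplane $T_p\Theta$ to the origin, is a hyperplane containing $W$, that is, a point of $\Lambda$.

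Next, assume $b+Y\not\subseteq\Sing(\Theta)$. Then $U:=(b+Y)\setminus\Sing(\Theta)$ is a nonempty open subset of $b+Y$, hence irreducible of dimension $g-n$, and $\mathcal{G}$ is a morphism on $U$ with $\mathcal{G}(U)\subseteq\Lambda$. Since $\dim\Lambda=n-1<g-n$, the theorem on the dimension of fibers of a morphism gives a point $h\in\mathcal{G}(U)$ with $\dim(\mathcal{G}|_U)^{-1}(h)\geq (g-n)-(n-1)=g-2n+1$; note that $g-2n+1\geq 1$ since $n\leq g/2$. Because $(\mathcal{G}|_U)^{-1}(h)\subseteq\mathcal{G}^{-1}(h)$, the Gauss map of $\Theta$ has a geometric fiber of dimension at least $g-2n+1$. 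When $n=2$ one can drop the hypothesis on $\Sing(\Theta)$: here $\dim Y=g-2=\dim\Theta-1$, while an irreducible theta divisor is normal by Ein--Lazarsfeld, so $\dim\Sing(\Theta)\leq\dim\Theta-2=g-3$; thus no translate of $Y$ is contained in $\Sing(\Theta)$ and we are reduced to the previous case.

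The only delicate input is the very last step: one must ensure that $\mathcal{G}$ is defined on a dense open subset of the translate $b+Y$, i.e.\ that $b+Y$ is not entirely contained in $\Sing(\Theta)$. For general $n$ this is precisely the hypothesis of part~\ref{4}; for $n=2$ it follows from the normality of irreducible theta divisors. Everything else is the incidence observation above together with a standard fiber-dimension count, so I expect no further obstacles.
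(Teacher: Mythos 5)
Your argument is correct and follows essentially the same route as the paper: restrict the Gauss map to a translate of $Y$ inside $\Theta$, observe its image lies in the $\mathbb{P}^{n-1}$ of hyperplanes containing $T_0Y$ (i.e.\ $\mathbb{P}\Ann(T_0Y)$), and conclude by the fiber-dimension theorem, with the $n=2$ case handled by the codimension bound $\dim\Sing(\Theta)\leq g-3$. The only cosmetic difference is that the paper invokes smoothness in codimension one of irreducible theta divisors with references valid in positive characteristic as well, whereas your appeal to Ein--Lazarsfeld normality covers the complex (characteristic zero) case.
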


%Recall that an irreducible Theta divisor over an ordinary abelian variety is smooth in codimension one, see \cite{EL} and \cite{H, HP, Watson}, so $\dim \Sing(\Theta)\leq g-3$. %If $n=2$, then $\dim Y>\dim \Sing(\Theta)$ and hence no translate of $Y$ can be contained in $\Sing (\Theta)$. This implies that the conditions of the previous theorem are always satisfied for $n=2$.

For $g\leq 5$, all ppavs are Prym varieties, so, in contrast with the Jacobian case, the Gauss map of a Prym variety can have positive dimensional fibers (the Gauss map in the Prym case has been investigated in \cite{Verra}). Let us also observe that the hypotheses of \autoref{Principal} imply $g\geq 4$. 

We do not know if item (\ref{4}) of \autoref{Principal} holds true for more refined versions of the Gauss map, such as the one discussed in \cite{Kr}.

In \autoref{simplicity} we give a proof of the well-known result that if an abelian variety with a principal polarization is simple, then the Gauss map cannot have positive-dimensional fibers. We then give a refinement of this result (\autoref{prop:family}) that describes what happens when we vary such a fiber over an irreducible base.

\subsection*{Relation with the Andreotti-Mayer loci and the Schottky problem}
The Andreotti-Mayer loci, introduced in \cite{AM} over the complex numbers and in \cite{Welter} over any algebraically closed field, are defined as
$$
\mathcal{N}_k=\{(A,\Theta) \; \textrm{such that} \; \dim \Sing(\Theta)\geq k\}\subseteq \mathcal{A}_g\,,
$$
where $\mathcal{A}_g$ is the moduli space of principally polarized abelian $g$-folds. We refer the reader to \cite{Bea} and \cite[Section 3]{SamSurvey} for a survey about the geometry of the Andreotti-Mayer loci and the Schottky problem. \autoref{Principal} gives a lower bound on the dimension of some irreducible components of these loci; namely, it shows that there exists a component $V$ of $\mathcal{N}_k$, for $k=g-2\delta$, such that 
$$\dim V \geq \dim\mathcal{A}_{n,g-n}^{\delta}=\frac{n(n+1)}{2}+\frac{(g-n)(g-n+1)}{2}.$$ 
We do not know if this lower bound on $\dim \mathcal{N}_k$ are sharp for some values of $g$, $n$ and $\delta$.

 The moduli spaces we study in \autoref{Principal} are contained in the locus of non-simple ppavs, see \cite[Remark 7.4]{CV}. In \cite{CV}, C. Ciliberto and G. van der Geer conjecture an upper bound for the dimension of components of $\mathcal{N}_k$ which are not contained in the locus of non-simple ppavs. Our dimensional bounds do not respect this conjecture, confirming that the hypothesis about simplicity is needed.

In relation to the Andreotti-Mayer approach to the Schottky problem we obtain the following corollary of \autoref{Principal} with $n=2$:
\begin{corollary}
For any $g\geq 4$, the image of $\mathcal{A}_{2,g-2}^{2}$ in $\mathcal{A}_g$ is contained in $\mathcal{N}_{g-4}$, but it does not intersect the locus of Jacobians of smooth curves.
\end{corollary}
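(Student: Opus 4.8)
The plan is to deduce both halves of the corollary directly from \autoref{Principal} specialised to $n=\delta=2$. First note that for $n=\delta=2$ the numerical hypotheses $2\le\delta\le n\le g/2$ of \autoref{Principal} hold precisely when $g\ge 4$, so the theorem applies to every $(A,\Theta)$ lying in the image of $\mathcal{A}_{2,g-2}^{2}$ in $\mathcal{A}_g$. Fixing such an $(A,\Theta)$ together with an abelian subvariety $X\subseteq A$ of dimension $2$ and degree $2$, item \ref{2} of \autoref{Principal} yields
$$\dim\Sing(\Theta)\ \ge\ g-2\delta\ =\ g-4,$$
which is exactly the condition defining membership in $\mathcal{N}_{g-4}$. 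This establishes the first assertion.

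For the second assertion I would argue by contradiction. Suppose some $(A,\Theta)$ in the image of $\mathcal{A}_{2,g-2}^{2}$ is the Jacobian of a smooth curve. Then $\Theta$ is irreducible (Jacobians are indecomposable ppavs), so item \ref{4} of \autoref{Principal} applies, its hypothesis being satisfied because $n=2$; hence the Gauss map $\mathcal{G}$ of $\Theta$ has a geometric fiber of dimension at least $g-2n+1=g-3\ge 1$, and in particular $\mathcal{G}$ is not finite. But, as recalled in the introduction (and see \cite{A, W} for the statement in arbitrary characteristic), the Gauss map of the theta divisor of the Jacobian of a smooth curve is finite. This contradiction shows that the image of $\mathcal{A}_{2,g-2}^{2}$ in $\mathcal{A}_g$ is disjoint from the locus of Jacobians of smooth curves.

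Since all the substance is already contained in \autoref{Principal}, there is essentially no obstacle in the corollary itself: the only points needing care are checking the numerical constraints $2\le\delta\le n\le g/2$ (so that the theorem is applicable for every $g\ge 4$) and invoking the precise finiteness statement for the Gauss map on Jacobians. The real work is of course in the proof of \autoref{Principal}.
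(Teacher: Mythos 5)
Your proposal is correct and follows the paper's own route: the first claim is exactly item (\ref{2}) of \autoref{Principal} with $n=\delta=2$, and the second is the paper's one-line argument that item (\ref{4}) would force a positive-dimensional Gauss fiber, contradicting the finiteness of the Gauss map for Jacobians. No further comment is needed.
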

The second statement follows from the fact that the Gauss map of a Jacobian is finite. In \cite{Deb}, it is shown with different methods and over the complex numbers that, for $g\geq 5$, $\mathcal{A}_{2,g-2}^{2}$ is an irreducible component of $\mathcal{N}_{g-4}$ distinct from the closure of the Jacobian locus. We discuss the case $g=4$ in Example \ref{Exg=4}. We are not able to  produce components of $\mathcal{N}_{g-3}$, so we cannot show that the Andreotti-Mayer solution to the hyperelliptic Schottky problem \cite{AM,Welter} is weak, cf \cite[Conjecture 3.15]{SamSurvey}.

\subsection*{Acknowledgments} Both authors were partially supported by CONICYT PIA ACT1415. This work started during the conference Geometry at the Frontier II (Puc\'on, November 2017). We thank the organizers and the participants for the pleasant and stimulating environment. We also had the pleasure and the benefit of conversations with Sam Grushevsky, Thomas Kr\"{a}mer and Riccardo Salvati Manni about the topics of this paper. The first author was also partially supported by Fondecyt Grant 11180965. We thank the referee for useful comments.

\section{Proof of the main theorem}

We will first prove \autoref{Principal}, and we will start by showing that the general element in $\mathcal{A}_{n,g-n}^{\delta}$ has an irreducible theta divisor.

We use the Poincar\'e Irreducibility Theorem and its corollaries \cite[Section IV.19]{Mum_Av}, see also \cite[Theorem 3.5]{Auff} and \cite[Section 9]{Deb}. Take $(A,\Theta)$ in $\mathcal{A}_{n,g-n}^{\delta}$. Given an abelian subvariety $X$ of $A$ of dimension $n$ and degree $\delta$, there exists another abelian subvariety $Y$ (uniquely determined by $X$ and $\Theta$) of degree $\delta$ such that the addition morphism $\pi:X\times Y\to A$ is an isogeny and such that $\pi^*\Theta=\mathcal{L}_X\boxtimes \mathcal{L}_Y$, where $\mathcal{L}_X:=\Theta|_X$ and $\mathcal{L}_Y:=\Theta|_Y$. One says that $X$ and $Y$ are complementary subvarieties with respect to $\Theta$. The following lemma proves item (\ref{3}) of \autoref{Principal}.

\begin{lemma} With the above notations, if $X$ and $Y$ are simple and non-isogenous, then $\Theta$ is irreducible.\end{lemma}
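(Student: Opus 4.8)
The plan is to argue by contradiction: suppose $\Theta$ is reducible. By the Decomposition Theorem, since $\Theta$ is a principal polarization, reducibility of $\Theta$ is equivalent to $(A,\Theta)$ decomposing as a nontrivial product of ppavs $(A_1,\Theta_1)\times(A_2,\Theta_2)$; equivalently, $\Theta$ is the union of the prime divisors $A_1\times\Theta_2$ and $\Theta_1\times A_2$ (up to translation). So I would first reduce to the following statement: if $(A,\Theta)=(A_1,\Theta_1)\times(A_2,\Theta_2)$ nontrivially, and $X,Y$ are complementary abelian subvarieties with respect to $\Theta$ with $X,Y$ simple, then $X$ and $Y$ must be isogenous — contradicting the hypothesis.

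The key point is to understand how the complementary subvariety pair $(X,Y)$ interacts with the product decomposition $A=A_1\times A_2$. First I would observe that $A_1$ and $A_2$ are themselves a pair of complementary abelian subvarieties with respect to $\Theta$ (since $\pi=\mathrm{id}$ is an isogeny and $\Theta = \Theta_1\boxtimes\Theta_2$ in the obvious sense). Now consider the symmetric idempotents in $\mathrm{End}(A)\otimes\Q$ associated to these two decompositions: let $\varepsilon_X,\varepsilon_Y$ be the idempotents cutting out $X,Y$, and $\varepsilon_1,\varepsilon_2$ those cutting out $A_1,A_2$, where complementarity with respect to $\Theta$ is exactly the statement that these are symmetric with respect to the Rosati involution and $\varepsilon_X+\varepsilon_Y=1$, $\varepsilon_1+\varepsilon_2=1$. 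Since $X$ is simple, $\varepsilon_X \mathrm{End}(A)\otimes\Q\, \varepsilon_X$ is a division algebra, and I would analyze the images $\varepsilon_1\varepsilon_X$, $\varepsilon_2\varepsilon_X$, etc. Concretely: $X\hookrightarrow A=A_1\times A_2$ gives projections $p_i\colon X\to A_i$; since $X$ is simple each $p_i$ is either zero or an isogeny onto its image. If some $p_i$ is zero then $X\subseteq A_j$ for $j\neq i$; I would then run the same dichotomy for $Y$ and, using $X+Y=A$ (up to isogeny) together with $\dim X+\dim Y=g$, derive that $\{X,Y\}$ and $\{A_1,A_2\}$ coincide up to isogeny — but complementary subvarieties are uniquely determined by their isogeny class together with $\Theta$, so this would force $X$ isogenous to one of $A_1,A_2$ and $Y$ to the other. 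In the remaining case both $p_1,p_2$ are nonzero, hence isogenies onto their images; then $X$ is isogenous to a subvariety of $A_1$ and to a subvariety of $A_2$, so $A_1$ and $A_2$ share a common simple isogeny factor, and I would play this against the complementarity relation $\pi^*\Theta=\mathcal L_X\boxtimes\mathcal L_Y$ restricted to the factors to again pin down $Y$ up to isogeny. Either way we contradict "$X,Y$ simple and non-isogenous".

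The main obstacle I anticipate is the bookkeeping in the case where both projections of $X$ (and of $Y$) to $A_1$ and $A_2$ are nonzero isogenies: one has to make sure the resulting isogenies are compatible with the polarizations, i.e. that the norm-endomorphisms / exponents behave well, so that "isogenous as abelian varieties" really does follow and one is not merely producing maps that ignore $\Theta$. The cleanest way around this is probably to work entirely with the Rosati-symmetric idempotents and the semisimple algebra $\mathrm{End}_\Q(A)$: decompose $1=\varepsilon_1+\varepsilon_2=\varepsilon_X+\varepsilon_Y$ and note $\varepsilon_X = \varepsilon_X\varepsilon_1\varepsilon_X + \varepsilon_X\varepsilon_2\varepsilon_X$ is a decomposition of the "identity" of the division algebra $\varepsilon_X\mathrm{End}_\Q(A)\varepsilon_X$ into two Rosati-positive-semidefinite symmetric elements, so exactly one of them is $0$ and the other is $\varepsilon_X$; this is the slick form of the dichotomy above and it immediately gives $X\subseteq A_1$ (up to isogeny) or $X\subseteq A_2$. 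Doing the same for $Y$ and comparing dimensions then finishes the argument with no genuine computation. I expect the whole proof to be short once this idempotent reformulation is in place.
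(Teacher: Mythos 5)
There is a genuine gap: you never use the hypothesis $\delta=h^0(X,\mathcal{L}_X)\geq 2$, which is part of ``the above notations'' and is essential --- without it the lemma is false. Indeed, take two non-isogenous simple principally polarized abelian varieties and let $(A,\Theta)$ be their product: then $X=A_1$ and $Y=A_2$ are simple, non-isogenous, complementary with respect to $\Theta$ (with $\delta=1$), yet $\Theta$ is reducible. Consequently the statement you propose to reduce to --- ``if $(A,\Theta)$ splits as a nontrivial product and $X,Y$ are simple complementary subvarieties, then $X$ and $Y$ are isogenous'' --- is false, and your case analysis shows the symptom: in the case where a projection vanishes you end up with $X$ isogenous to one of the $A_i$ and $Y$ to the other, which is not in conflict with ``$X,Y$ non-isogenous''; it is exactly the situation of the counterexample, so no contradiction can be extracted at that point. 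The contradiction has to come from the degree: once one knows that the splitting must be $A\cong X\times Y$ with $\Theta=\mathcal{L}_X\boxtimes\mathcal{L}_Y$ (the paper obtains this by showing $X$ and $Y$ are the only nontrivial abelian subvarieties of $A$, via $\mathrm{End}(X\times Y)=\mathrm{End}(X)\oplus\mathrm{End}(Y)$), principality of $\Theta$ forces $h^0(X,\mathcal{L}_X)=1$, contradicting $\delta\geq 2$. Your argument as written never reaches, or even mentions, this step.

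A secondary problem is the ``slick'' idempotent dichotomy: in the division algebra $\varepsilon_X\,\mathrm{End}_{\Q}(A)\,\varepsilon_X$, a sum of two Rosati-symmetric positive semidefinite elements equal to $\varepsilon_X$ need not have one summand zero (consider an elliptic curve embedded diagonally in $E\times E$: both $\varepsilon_X\varepsilon_1\varepsilon_X$ and $\varepsilon_X\varepsilon_2\varepsilon_X$ are nonzero), so it does not ``immediately give'' $X\subseteq A_1$ or $X\subseteq A_2$ up to isogeny. That case can instead be excluded using the hypothesis $X\not\sim Y$: if both projections of $X$ were nonzero, $X$ would occur with multiplicity at least $2$ in the isogeny decomposition of $A\sim X\times Y$, which is impossible for $X,Y$ simple and non-isogenous. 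But this repairs a side step only; it does not fill the main gap about the degree $\delta\geq 2$ described above.
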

\begin{proof}
First, we show that $X$ and $Y$ are the unique non-trivial abelian subvarieties of $A$. Since $A$ is isogenous to $X\times Y$ and any isogeny gives a bijection between abelian subvarieties of the two abelian varieties, we will prove that $X$ and $Y$ are the unique non-trivial abelian subvarieties of $X\times Y$. Let $B$ be a proper abelian subvariety of $X\times Y$, and let $f\in\mbox{End}(X\times Y)$ be such that $B=\mbox{im}(f)$. Since $X$ and $Y$ are simple and not isogenous to each other,
$$\mbox{End}(X\times Y)=\mbox{End}(X)\oplus\mbox{End}(Y),$$
and so $f$ splits as $g\oplus h$, where $g\in\mbox{End}(X)$ and $h\in\mbox{End}(Y)$. Since $X$ and $Y$ are simple, this implies that $B=\mbox{im}(f)\in\{0,X,Y,X\times Y\}$. 

We now prove the actual lemma. Assume by contradiction that $\Theta$ is reducible, then $(A,\Theta)$ splits as a product of two non-trivial principally polarized abelian varieties. Since $X$ and $Y$ are the only non-trivial abelian subvarieties, we must  have that $\pi\colon X\times Y\to A$ is an isomorphism and $\Theta=\mathcal{L}_X\boxtimes \mathcal{L}_Y$.  We conclude that $\Theta$  does not give a principal polarization as $h^0(X,\mathcal{L}_X)=\delta\geq 2$, so we get the desired contradiction.
\end{proof}

We will now proceed to prove item (\ref{1}) of \autoref{Principal}. Let $\theta$ be a non-zero section in $H^0(A,\mathcal{O}_A(\Theta))$, and write
$$\pi^*\theta=\sum_{i=1}^r \tau_i\otimes\mu_i ,$$
with $\tau_i \in  H^0(X,\mathcal{L}_X)$ and $\mu_i \in  H^0(Y,\mathcal{L}_Y)$. Since $\dim H^0(X,\mathcal{L}_X)=\delta\leq n$ by assumption, the base locus $\Bs|\mathcal{L}_X|$ is not empty. The restriction of $\pi^*\theta$ to $\Bs|\mathcal{L}_X|\times Y$ is identically zero, so 
$$\pi(\Bs|\mathcal{L}_X|\times Y)\subseteq\Theta.$$
This implies that $\Theta$ contains at least as many translates of $Y$ as the number of elements in $\pi(\Bs|\mathcal{L}_X|\times\{0\})$. We also note that this argument is symmetric in $X$ and $Y$, since $h^0(X,\mathcal{L}_X)=h^0(Y,\mathcal{L}_Y)=\delta$. Therefore we can conclude that $\Theta$ contains a translate of $X$ as well, by using an analogous argument.

For item (\ref{2}), Since $2n\leq g$, we have that $\Bs|\mathcal{L}_Y|$ is not empty, and 
$$\dim \left(\Bs|\mathcal{L}_X|\times \Bs|\mathcal{L}_Y|\right)\geq g-2\delta.$$ 
We now note that $\pi\left(\Bs|\mathcal{L}_X|\times \Bs|\mathcal{L}_Y|\right)\subset \Sing(\Theta)$, since for every point $p$ in $\Bs|\mathcal{L}_X|\times \Bs|\mathcal{L}_Y|$, the pull-back $\pi^*\theta$ is inside the second power of the maximal ideal of $p$ in $A$.

We now finish the proof of \autoref{Principal} by proving item (\ref{4}). 

Recall that an irreducible Theta divisor over an ordinary abelian variety is smooth in codimension one, see \cite{EL} and \cite{H, HP, Watson}, so $\dim \Sing(\Theta)\leq g-3$. If $n=2$, then $\dim Y>\dim \Sing(\Theta)$ and hence no translate of $Y$ can be contained in $\Sing (\Theta)$.

Take a point $p$ in $\Bs|\mathcal{L}_X|\times Y $ such that $\Theta$ is smooth at $p$. We have $T_pY\subset T_p\Theta$. The Gauss map is well-defined at $p$, and $\mathcal{G}(p)$ is a hyperplane containing $T_{0}Y$. We conclude that the image of the Gauss map restricted to $\Bs|\mathcal{L}_X|\times Y$ is contained in the projectivization of the annihilator of $T_{0}Y$; i.e. $\mathcal{G}(Y)\subset \mathbb{P} \Ann(T_{0}Y)\subset \mathbb{P}(T_{0}A)^{\vee}$. The projective space $\mathbb{P} \Ann(T_{0}Y)$ has dimension $n-1$, so the claim about the the fibers of the Gauss map follows from a dimension argument.

\begin{example}\label{Exg=4} We describe the case $g=4$ and $n=\delta=2$ over the complex numbers.  In this case, $\mathcal{A}_{2,2}^2$ is irreducible and $6$-dimensional. 

We affirm that for each $(A,\Theta)\in\mathcal{A}_{2,2}^2$ with $\Theta$ irreducible, $\Sing(\Theta)\subset A[2]$, and consists of at least 4 points.  For the first claim, recall that $(A,\Theta)$ is not a Jacobian because the Gauss map is not finite, and  the singular locus of an irreducible ppav of dimension $4$ which is not a Jacobian is a subset of $A[2]$, see e.g. \cite[Theorem 3.13]{SamSurvey}. For the second claim, we have that 
$$\pi(\text{Bs}|\mathcal{L}_X|\times\text{Bs}|\mathcal{L}_Y|)\subseteq\Sing(\Theta)\,.$$
Now $\text{Bs}|\mathcal{L}_X|$ and $\text{Bs}|\mathcal{L}_Y|$ each consist of four points, see  \cite[Lemma 10.1.2]{BL}, and all of them are of order 4, \cite[Example 10.1.4]{BL}. The map $\pi$ is an isogeny, and the action of its kernel on $A$ preserves $\text{Bs}|\mathcal{L}_X|\times\text{Bs}|\mathcal{L}_Y|$ and has no fixed points. This kernel is a maximal isotropic subgroup of $K(\mathcal{L}_X)\oplus K(\mathcal{L}_Y)$, where $$K(\mathcal{L}_X):=\{x\in X:t_x^*\mathcal{L}\simeq\mathcal{L}\}$$
(and similarly for $K(\mathcal{L}_Y)$).  We conclude that $\pi(\text{Bs}|\mathcal{L}_X|\times\text{Bs}|\mathcal{L}_Y|)$ consists of $16/4=4$ points which are $2$-torsion.

 Let $\theta_{\text{null}}^k$ be the closed subset of $\mathcal{A}_4$ parametrizing ppavs such that $\Sing(\Theta)\cap A[2]$  consists of at least $k$ even points. By the main result of \cite{Deb3}, $\theta_{
\text{null}}^k$ is empty if $k>10$, and it is a single point if $k=10$. Around this point, for $k<10$, $\theta_{\text{null}}^k$ is defined by the vanishing of $10-k$ thetanulls, in particular it is $10-k$ dimensional (recall that $\dim \mathcal{A}_4=10$). We claim that $\mathcal{A}_{2,2}^2$ contains $\theta_{\text{null}}^{10}$, hence $\mathcal{A}_{2,2}^2$ is an irreducible component of $\theta_{\text{null}}^4$. 

We can prove the claim as follows: as explained in \cite[Section 5]{Deb3}, the unique point $(A_0,\Theta_0)\in\theta_{\text{null}}^4$ has a lattice given by the root lattice of $E_8$, and a simple calculation shows that there exists a basis $e_1,\ldots,e_8$ of this lattice such that the alternating Riemann form with respect to this basis is given by the matrix
$$\left(\begin{array}{rrrrrrrr}0&2&-2&0&0&0&0&1\\-2&0&1&0&0&0&0&-1\\2&-1&0&1&-1&0&0&1\\0&0&-1&0&1&0&0&-1\\0&0&1&-1&0&1&-1&1\\0&0&0&0&-1&0&1&-1\\0&0&0&0&1&-1&0&1\\-1&1&-1&1&-1&1&-1&0\end{array}
\right)$$ 
The $E_8$ lattice has a natural multiplication by $i=\sqrt{-1}$, and a quick calculation shows that with this multiplication, the submodule $S:=\langle e_1,e_2,e_3,e_4\rangle$ is stable by $i$, and the restriction of $E$ to $S$ is an alternating form of type $(1,2)$. Indeed, if we consider the basis $v_1=e_4$, $v_2=e_1$, $v_3=-e_1-e_2-e_3$ and $v_4=e_2-e_4$, we have that the restriction of $E$ takes the form
$$\left(\begin{array}{cc}0&D\\-D&0\end{array}\right)$$ 
where $D=\text{diag}(1,2)$. This implies that $S$ induces an abelian surface on $A_0$ of type $(1,2)$ with respect to $\Theta_0$, and therefore $(A_0,\Theta_0)\in\mathcal{A}_{2,2}^2$. Therefore $\mathcal{A}_{2,2}^2$ is an irreducible component of $\theta_{\text{null}}^4$.

\end{example}

\subsection{The Gauss map and simplicity.}\label{simplicity} We will now briefly look at abelian subvarieties of an abelian variety whose Gauss map has a positive-dimensional fiber. 

\begin{definition}If $A$ is an abelian variety and $S\subseteq A$ is a non-empty subset, we let $\langle S\rangle$ be the intersection of all abelian subvarieties of $A$ that contain the connected component of $S-S:=\{s-s':s,s'\in S\}$ that contains 0. It will be called the \emph{abelian subvariety generated by} $S$.
\end{definition}

We start with the following well-known complementary result (cf. \cite[Lemma 11.1]{CV}).

\begin{proposition}\label{complement}
If $A$ is simple, the Gauss map does not have positive-dimensional fibers.
\end{proposition}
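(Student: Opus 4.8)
The plan is to prove the contrapositive: if the Gauss map $\mathcal{G}:\Theta\dashrightarrow\mathbb{P}^{g-1}$ has a fiber of positive dimension, then $A$ is not simple. Suppose $F$ is an irreducible component of a fiber $\mathcal{G}^{-1}([H])$ with $\dim F \geq 1$, where $H\subseteq T_0A$ is the hyperplane to which $\mathcal{G}$ sends every smooth point of $F$. By definition of the Gauss map, for every smooth point $p\in F\cap(\Theta\setminus\Sing\Theta)$ the translated tangent space $T_p\Theta - p$ equals $H$; in particular the tangent space to $F$ at such a point lies in $T_p\Theta$, and as we move $p$ along $F$ the tangent hyperplanes are all parallel (equal after translation to $0$).

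**Key geometric step.** The main point is to show that the abelian subvariety $\langle F\rangle$ generated by $F$ is proper, i.e. $0<\dim\langle F\rangle<g$. Since $\dim F\geq 1$ we have $\dim\langle F\rangle\geq 1$. For the upper bound, I would argue that $\langle F\rangle$ is contained in $H$ viewed as a sub-vector-space of $T_0A\cong\operatorname{Lie}(A)$: the translation vectors $p-q$ for $p,q\in F$ should be forced into $H$. Concretely, consider the difference map $F\times F\to A$, $(p,q)\mapsto p-q$; its image generates $\langle F\rangle$. A Gauss-fiber argument (this is the classical fact, going back to Griffiths--Harris / the study of Gauss maps on subvarieties of abelian varieties) shows that along a positive-dimensional fiber $F$ of the Gauss map, $F$ is ``translation-invariant in the direction of its own tangent spaces,'' forcing $T_pF$ and hence all secant directions of $F$ to lie in the fixed hyperplane $H$. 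Therefore $\langle F\rangle\subseteq H$, so $\dim\langle F\rangle\leq g-1<g$, and $\langle F\rangle$ is a proper nonzero abelian subvariety of $A$, contradicting simplicity.

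**Executing the translation-invariance argument.** To make the bullet above precise I would proceed as follows. Pick a smooth point $p_0\in F$ and look at the germ of $F$ there; parametrize a smooth curve $t\mapsto p(t)$ in $F$ through $p_0$. The condition $\mathcal{G}(p(t))=[H]$ constant means $T_{p(t)}\Theta = p(t) + H$ for all $t$. Differentiating the defining equation $\theta(p(t))=0$ twice and using that the first derivative of $\theta$ along $F$ vanishes (since $T_pF\subseteq T_p\Theta$), one extracts that the velocity vector $p'(t)$ pairs trivially against the relevant second-order data; the upshot is that $p'(t)\in H$ for all $t$, hence $p(t)-p_0\in H$ for the whole germ, and by irreducibility and connectedness $F - p_0 \subseteq H$, giving $\langle F\rangle\subseteq H$. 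An alternative, cleaner route is to invoke the characteristic-free treatment of the Gauss map in \cite{A, W}: there it is shown that the fibers of the Gauss map of a subvariety of an abelian variety are unions of translates of the ``Gauss-degenerate'' directions, which are precisely cosets of a fixed abelian subvariety sitting inside the tangent hyperplane; applying this to $\Theta$ itself yields the claim directly.

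**Main obstacle.** The delicate point is the second-derivative computation (or the invocation of the right structural result): one must rule out the possibility that $F$ curves out of $H$ while still having all its tangent hyperplanes to $\Theta$ parallel. This is exactly the place where one uses that $\mathcal{G}$ is the Gauss map of a \emph{divisor}, so that the tangent hyperplane at $p$ is cut out by a single linear form $d\theta_p$, and constancy of this form along $F$ is a strong condition. I expect that spelling out this step carefully — or pinning down the precise statement from \cite{A} or \cite{W} that makes it immediate — is the crux; the rest (that a proper nonzero abelian subvariety contradicts simplicity, and that $\langle F\rangle$ is nonzero because $\dim F\geq 1$) is formal.
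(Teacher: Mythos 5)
Your skeleton is exactly the paper's: take a positive-dimensional irreducible component $F$ of a fiber of $\mathcal{G}$ over a fixed hyperplane $H\subseteq T_0A$, pass to the abelian subvariety $\langle F\rangle$ generated by $F$, show it sits infinitesimally inside $H$, and conclude it is a nonzero proper abelian subvariety, contradicting simplicity. Where you differ is in how the key containment is justified, and here two comments are in order. First, the ``main obstacle'' you identify is a red herring: no second-order information about $\theta$ is needed, since for any smooth point $p$ of $\Theta$ lying in $F$ one has $T_pF\subseteq T_p\Theta$, and constancy of $\mathcal{G}$ on $F$ says precisely that $T_p\Theta$ translated to the origin is $H$; thus $p'(t)\in H$ is immediate, and the genuinely nontrivial step is the passage from ``all translated tangent spaces of $F$ lie in $H$'' to the correctly stated conclusion $T_0\langle F\rangle\subseteq H$ (your ``$\langle F\rangle\subseteq H$'' conflates a subvariety of $A$ with a linear subspace of $T_0A$). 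The paper handles exactly this step with its remark about the differential of the addition morphism: $\langle F\rangle$ is the image of an iterated difference/sum map on copies of $F$, whose differential at a general point has image spanned by translated tangent spaces of $F$, all inside $H$; this is algebraic and works over any algebraically closed field, as required by the paper's standing hypotheses. Your route instead integrates the tangent condition along a curve (equivalently, lifts to the universal cover), which is fine over $\mathbb{C}$ but is analytic and does not cover positive characteristic; your fallback of invoking the structural results on degenerate Gauss maps in \cite{A} and \cite{W} would restore characteristic-freeness, at the cost of quoting much heavier machinery than the one-line addition-map argument. So the proposal is essentially correct, but tighten the statement to $T_0\langle F\rangle\subseteq H$ and replace the second-derivative discussion by the addition-map (or lifting) argument.
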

\begin{proof}
We argue by contradiction. Let $F$ be a positive dimensional irreducible component of a fiber of the Gauss map. Write $H=\mathcal{G}(F)$, so that $H$ is a hyperplane in the tangent space at the identity $0$ of $A$. Let $B$ be the abelian subvariety generated by $F$. Looking at the differential of the addition morphism, we can show that $T_{0}B\subseteq H$, so $B$ is a proper abelian subvariety of $A$, and $A$ is not simple.
\end{proof}

Now assume that $(A,\Theta)$ is a principally polarized abelian variety with $\Theta$ irreducible. We can be more specific as to what happens when the Gauss map does have positive-dimensional fibers: we will associate a proper abelian subvariety of $A$ to every irreducible component of the locus in $\mathbb{P}^{g-1}$ over which the Gauss map has positive-dimensional fibers. 

\begin{proposition}\label{prop:family}
Let $W\subseteq\mathbb{P}^{g-1}$ be an irreducible scheme and $Z$ an irreducible component of $\mathcal{G}^{-1}(W)$ such that the Gauss map restricts to a surjective morph\-ism $\mathcal{G}:Z\to W$ and the dimension of the fiber of $Z\to W$ over any point in $W$ is positive. Assume moreover that the base field is uncountable. Then there exists a proper abelian subvariety $Y\subseteq A$ such that for every fiber $F$ of $\G$ over $W$, there exists a point $x=x(F)$ of $A$ such that $F$ is contained in $Y+x$. In particular, $W\subseteq\mathbb{P}\Ann(T_0Y)$.
\end{proposition}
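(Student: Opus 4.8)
The plan is to recover $Y$ from a general fibre of $\mathcal{G}$ by a countability argument, and then to propagate the statement to all fibres. Fix a general closed point $w\in W$, so that $F_w:=\mathcal{G}^{-1}(w)\cap Z$ is equidimensional of dimension $d:=\dim Z-\dim W\geq 1$; let $F\subseteq F_w$ be an irreducible component and set $Y_F:=\langle F\rangle$. The first step is to show $Y_F$ is a \emph{proper} abelian subvariety. For every $z\in F$ we have $T_zF\subseteq T_z\Theta=z+H_w$, where $H_w\subseteq T_0A$ is the hyperplane represented by $w$; differentiating the addition morphism $F^{\,k}\to A$ (whose image is a translate of $Y_F$ for $k\gg 0$) exactly as in the proof of \autoref{complement} then gives $T_0Y_F\subseteq H_w\subsetneq T_0A$.

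Next I would show that $Y_F$ is, for general $w$, independent of $w$ and $F$. There are only countably many abelian subvarieties $B_1,B_2,\dots$ of $A$. For each $i$ the set $W_i\subseteq W$ of points $w$ over which some irreducible component of $F_w$ generates exactly $B_i$ is constructible: writing $\phi_i\colon Z\hookrightarrow A\to A/B_i$, containment of a component in a translate of $B_i$ is the condition that $\phi_i$ be constant on it, which is constructible in $w$ by Chevalley's theorem for $(\phi_i,\mathcal{G})\colon Z\to (A/B_i)\times W$ (using that $\mathcal{G}$ is a genuine morphism on $Z$), and ``generating exactly $B_i$'' cuts out a further constructible locus. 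Since $W$ is irreducible and the base field is uncountable, $W$ is not the union of the countably many proper closed subsets among the $\overline{W_i}$; hence $\overline{W_i}=W$ for some $i$, so $W_i$ contains a dense open $U\subseteq W$. Put $Y:=B_i$, which is proper by the first step. A small extra argument (running the same reasoning on the Stein factorization of $\mathcal{G}\colon Z\to W$, whose fibres are connected) shows that for $w\in U$ every connected component of $F_w$ generates $Y$, and hence lies in a single translate of $Y$.

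Finally I would pass from $U$ to all of $W$. Over $U$ the above says that $\phi_Y\colon Z\to A/Y$ is constant on the fibres of $\mathcal{G}$ lying there, equivalently on the generic fibre of $\mathcal{G}\colon Z\to W$; since $\mathcal{G}$ is proper and surjective, after normalizing the base and using that a rational map from a smooth variety to an abelian variety is a morphism one upgrades this to a factorization $\phi_Y=\psi\circ\mathcal{G}$ with $\psi$ a morphism to $A/Y$. Equivalently, the closure in $A\times W$ of $\{(z-z',\mathcal{G}(z)):z,z'\in Z,\ \mathcal{G}(z)=\mathcal{G}(z')\}$ is contained in $Y\times W$, which is exactly the statement that every fibre of $\mathcal{G}$ over $W$ lies in a single translate of $Y$. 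The last assertion of the proposition then follows from the first step: $T_0Y\subseteq H_w$ for every $w\in U$, and since $\mathbb{P}\Ann(T_0Y)\subseteq\mathbb{P}^{g-1}$ is closed and $U$ is dense in $W$, we get $W\subseteq\mathbb{P}\Ann(T_0Y)$.

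I expect the last step to be the main obstacle. The behaviour of the generic fibre does not formally control the special ones: the ``difference variety'' in $A\times W$ above may acquire components lying over proper subvarieties of $W$ (fibres where $\mathcal{G}$ degenerates), and one must rule these out or treat them, e.g. by the factorization of $\phi_Y$ through $\mathcal{G}$ or by Noetherian induction on $\dim W$. It is precisely here, and in handling disconnected fibres via the Stein factorization, that the hypotheses that $\mathcal{G}\colon Z\to W$ be a proper morphism with everywhere-positive fibre dimension get used. The middle step is comparatively routine, granted the standard facts that $A$ has only countably many abelian subvarieties and that an irreducible variety over an uncountable field cannot be written as a countable union of proper closed subsets.
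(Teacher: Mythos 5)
Your overall strategy (only countably many abelian subvarieties, an uncountable base field, and the tangent-space argument of \autoref{complement} applied to fibre components) is the right raw material, but as you yourself anticipate, the proof as written has a genuine gap at the last step, and the sketched repairs do not close it. From the constructibility/Chevalley argument you only get a dense open $U\subseteq W$ over which the fibres lie in translates of $Y$, and nothing you say controls the fibres over $W\setminus U$: the special fibres of $Z\to W$ may have components that are not limits of the generic fibre in any sense that forces them into a translate of $Y$ (your ``difference variety'' can indeed have components over proper closed subsets of $W$). The proposed fixes are not available under the stated hypotheses: $\mathcal{G}\colon Z\to W$ is \emph{not} assumed proper ($Z$ is a component of $\G^{-1}(W)$ inside the domain of definition of the Gauss map, which typically omits limit points in $\Sing(\Theta)$), so the Stein factorization/rigidity-type argument does not get off the ground; and the induced map $\psi\colon W\dashrightarrow A/Y$ is a priori only a rational map, which need not extend to a morphism since $W$ (even after normalization) can be singular --- the theorem you invoke requires smoothness. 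So the conclusion ``for \emph{every} fibre $F$'' is not reached.

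The paper avoids this generic-to-special problem altogether by making the relevant loci \emph{closed} rather than merely constructible, using the completeness of $A$ to parametrize the unknown translate. For each proper abelian subvariety $B$ one sets $W_B:=\{(z,w)\in A\times W: z+\G^{-1}(w)\subseteq B\}$; the second projection $\pi_B\colon W_B\to W$ is proper (the translation parameter $z$ varies in the complete variety $A$), so $\pi_B(W_B)$ is closed in $W$, and moreover $\pi_B(W_B)\subseteq\mathbb{P}\Ann(T_0B)$. By the argument of \autoref{complement}, every fibre over $W$ lies in a translate of some proper $B$, hence $W=\bigcup_B\pi_B(W_B)$ is a countable union of closed subsets; irreducibility of $W$ and uncountability of the field then give $W=\pi_Y(W_Y)$ for a single $Y$. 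Because this is an equality with one closed image, the containment $F\subseteq Y+x(F)$ holds for every fibre at once, and $W\subseteq\mathbb{P}\Ann(T_0Y)$ falls out of the same containment --- no extension from a dense open set, no Stein factorization, and no smoothness or properness of $\mathcal{G}|_Z$ is needed. If you want to salvage your write-up, replace the constructibility step by this incidence-variety formulation so that the countable-union argument is applied to closed sets from the start.
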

\begin{proof}
Let $B\subseteq A$ be an abelian subvariety of $A$, and consider the variety
$$W_B:=\{(z,w)\in A\times W:z+\G^{-1}(w)\subseteq B\}.$$
The second projection gives a proper morphism $\pi_B:W_B\to W$, and therefore $\pi_B(W_B)$ is either empty or a closed subset of $W$. Moreover, $\pi_B(W_b)\subseteq \mathbb{P}\Ann(T_0B)$.

Now \autoref{complement} implies that for every $w\in W$, $\G^{-1}(w)$ is contained in the translate of some proper abelian subvariety of $A$. This implies that
$$W=\bigcup_{B}\pi_B(W_B)$$
where $B$ runs over all proper abelian subvarieties of $A$. Now this is a countable union of closed subvarieties of $W$, and by the irreducibility of $W$ and the uncountability of the base field, there must exist a proper abelian subvariety $Y\subseteq A$ such that $W=\pi_Y(W_Y)$.
\end{proof}

Let $P\subset \mathbb{P}^{g-1}$ be the locus over which the Gauss map has positive dimensional fibers. It is still not clear what relation exists between the irreducible components of $P$ and the abelian subvarieties associated to these via \autoref{prop:family}. For example, what does it mean in terms of the irreducible components for the associated abelian subvarieties to be complementary? Under what circumstances are these components linear subspace of the form $\mathbb{P} \Ann(T_0Y)$? Are there examples of positive dimensional fibers different from the ones provided in \autoref{Principal}? These are questions that we wish to pursue in future work.

\end{document}